\def\call{{\rm c\kern-3.5pt                                    
 \vrule height 5.0pt width 0.4pt depth -0.5pt \phantom {.}}}
\newcommand{\rme}{{\rm e}}
\newcommand{\rmd}{{\rm d}}
\newcommand{\ep}{{\varepsilon}}
\newcommand{\1}{{\bf 1}}
\newcommand{\pd}{{\partial}}
\newcommand{\F}{{\mathcal F}}
\newcommand{\PP}{{\mathbb P}}
\newcommand{\QQ}{{\mathbb Q}}
\newcommand{\RR}{{\mathbb R}}
\newcommand{\ZZ}{{\mathbb Z}}
\newcommand{\EE}{{\mathbb E}}
\newcommand{\HV}{{\rm H\negthickspace V}}
\newtheorem{theo}{Theorem}[section]
\newtheorem{prop}{Proposition}[section]
\newtheorem{rem}{Remark}[section]
\begin{document}
\title{Telegraph Processes with Random Jumps and Complete Market Models
}


\author{Nikita Ratanov\\
              Universidad del Rosario, Cl.\,12c, No.~4-69, Bogot\'a, Colombia \\        
             Email: nratanov@urosario.edu.co       
}


\date{}
\maketitle

\begin{abstract}
We propose a new generalisation of jump-telegraph process with variable velocities and jumps.
Amplitude  of the jumps and velocity values are random, and they  depend 
on the  time spent by the process in the previous state  of  the underlying Markov process.

This construction is applied to markets modelling.  
The distribution densities and the moments satisfy some integral equations of the Volterra type.
We use them for characterisation of the  equivalent risk-neutral measure and for the expression of
historical volatility in various settings. The fundamental equation is derived by similar arguments.

Historical volatilities are computed numerically. 

\textbf{Keywords:}    \emph
{inhomogeneous  jump-telegraph process; dependence on the past;
historical volatility; compound Poisson process}

\textbf{Mathematics Subject Classification (2000) }
60J27 · 60J75 · 60K99 · 91G99
\end{abstract}

\section{Introduction}
The model of non-interacting particles which move with alternating finite velocities
was first introduced  by \cite{Taylor}.
 Later,   the model was developed by   \cite{Gold}
 in connection with a certain hyperbolic partial differential equation. 
 In 1956 Mark Kac (see \cite{Kac}) began to study  the telegraph model in detail. 
Assuming the random time intervals $T_n$ between the velocity's reversal to be independent 
 and exponentially distributed, $T_n\sim\mathrm{Exp}(\lambda)$,  
Kac derived the telegraph (damped wave) equation
 for the distribution density $p=p(x, t)$  of the particles' positions,
 \[
 \frac{\pd^2 p}{\pd t^2}+2\lambda \frac{\pd  p}{\pd t }=c^2 \frac{\pd^2 p}{\pd x^2}.
 \]
 
Afterwards, the telegraph process
and its many generalisations have been studied in great detail.
 In particular, the generalisations towards motions with the velocities alternated in
 gamma- or Erlang-distributed random instants have been studied many times,
 see e. g. \cite{DiC2,DiC1,Zacks1}. 
 Telegraph processes with random velocities have been  considered by  \cite{Zacks2}.

Applications of telegraph processes 
to market modelling have been presented first by \cite{Kabanov},
and then, by \cite{R1999,Franco}.
Now these applications are  transformed into the theory of 
Markov-modulated market models based on 
 telegraph processes with alternated constant velocities, 
see e. g. \cite{R2007,R2010} and  \cite{ON2012} (see also the survey in \cite{KR}). 
One of the key principles of such a modelling 
 is that the models are based on  \emph{observable} parameters  such
as velocity and  jump amplitude. Replacing the measure, we only change
  the underlying distributions of time intervals between 
 velocity reversals.

In this paper we assume that the telegraph particle  moves with alternated 
\emph{random and variable} 
velocities  performing jumps of 
\emph{random} amplitude whenever the velocity is changed. 
The telegraph processes of this type have been studied 
earlier only under the assumption of mutual independence 
of velocity values and jump amplitudes, see \cite{Zacks2,ON2012, DiC2013-2}. 
Here we assume that the actual velocity regime and subsequent jump  are determined by 
the  functions of the time spent by the particle in the previous state.
We assume also that the time intervals between the state reversals 
have sufficiently arbitrary distributions.
Under these assumptions we obtain the version of telegraph process
which possesses some accelerating/damping properties.
The paper is a continuation of the paper \cite{R13}, where 
such generalisations of the telegraph processes began to be  studied.
Here the problem is considered in a bit more general setting and with financial applications.

Such a model with deterministic velocities and jumps is studied in detail by \cite{R2007,DiC2013}.
Moreover, earlier we proposed the option pricing model 
based on jump-telegraph processes, \cite{R2007}.
In this paper we use these processes with random velocity and jumps 
for the purposes of financial modelling. In particular,
this  corresponds better to the technical 
analysis of oversold and overbought markets. 

If the random jump amplitudes are statistically
 independent of the underlying continuous process, 
 then the market model is typically incomplete
(see the classical paper by  \cite{Merton}, 
  the review by \cite{JDM}
of the jump-diffusion models, 
and also by \cite{ON2012}  for the models based on the telegraph processes).

We profess here the approach of  complete markets.
In \cite{CoxRoss} the market model is based on the simple jump process
and thus with a single source of randomness. Thus the model is complete.
The model, proposed in this paper, typically remains to be complete and arbitrage-free
(similar to another simple model with fixed and deterministic jump amplitude \cite{R2007}). 
In contrast with \cite{R2007}, in our recent setting the closed formulae for  option prices do not exist.
To analyse memory properties of the proposed model
 we numerically evaluate a historical volatility.
 
 The paper is organised as follows.
 The underlying processes are described in Section  \ref{sec:2}.
Section \ref{sec:mart} is devoted to a version of Doob-Meyer decomposition 
which permits to characterise martingales in our version of jump-telegraph processes. 
 The market model (together with the fundamental equation) is 
presented in Section \ref{sec:3}. We focus on the calculations
of historical volatility in Section \ref{sec:num}.  

\section{Generalised jump-telegraph processes}\label{sec:2}
\setcounter{equation}{0}

Let  $(\Omega, \mathcal{F}, \PP)$ be a complete probability space
with given filtration $\mathcal{F}_t,\; t\geq0$ satisfying the usual hypotheses, \cite{Protter}.
We start with a two-state continuous-time Markov process
$\ep=\ep(t)\in\{0, 1\},\; t\geq 0$, adapted to $\mathcal{F}_t.$ 
Fixing  the initial state of   $\ep$,
consider the conditional probabilities $\PP_i,\; i\in\{0, 1\}$ with respect to the initial state of $\ep$,
\[\PP_i(\cdot):=\PP(\cdot~|~\ep(0)=i).\]
The corresponding expectations will be denoted by $\EE_i\{\cdot\}$.
Assume that sample paths of $\ep=\ep(t),$ $t\geq0,$ are  right-continuos a. s. 

To fix the distribution properties of  process $\ep$
 we begin with the set of independent random variables 
$T_n, n\in\ZZ,\; T_n\geq0$ with alternated distributions.
Denote the respective distribution functions by $F_0,$ $F_1,$
the survival functions by $\bar F_0,\; \bar F_1$ and 
the densities by $f_0,\; f_1$. 
The subscript indicates the starting 
position of the alternation which corresponds to the initial
state of $\ep$, that is the distribution of $T_n$ depends on $\ep(0)=i\in\{0, 1\}.$
Precisely,  under probability $\PP_i$
the distribution function of $T_n$ is $F_i$, if $n$ is odd, and $F_{1-i}$, if $n$ is even.

Random variables $T_n$ are the time intervals between 
successive switching of Markov process $\ep$.
Let $\mathfrak{T}=\{\tau_n\}$ be the Markov flow of switching times. Then $T_n=\tau_n-\tau_{n-1}$.
We assume the usual non-explosion condition,
\[
\tau_\infty:=\lim \tau_n=+\infty,\qquad \PP\text{-a.s.}
\]
Moreover, let  $\tau_0=0$, i.e. process $\ep$ starts at a switching instant.

The latter assumption can be neglected. 
If the process is observed beginning from time $s,\;  0=\tau_0<s<\tau_1$,
the corresponding conditional distributions can be described by the survival functions
\begin{equation}\label{barFbarf}
\begin{aligned}
\bar F_i(t~|~s)=&\PP_i(T>t~|~T>s)=\frac{\PP_i(T>t)}{\PP_i(T>s)}=\frac{\bar F_i(t)}{\bar F_i(s)},\\
& 0\leq s<t,\; i=0, 1.\end{aligned}\end{equation}
Therefore, the corresponding densities are 
\[
f_i(t~|~s)=-\frac{\pd}{\pd t}\bar F_i(t~|~s)=\frac{f_i(t)}{\bar F_i(s)},\quad 0\leq s<t,
\quad i=0, 1\]
Here $T=T_1=\tau_1>0$ is the first switching time.

Consider a particle, which moves on $\RR$ 
under alternated velocity regimes $c_0$ and $c_1,$ starting from the origin.
  The velocities are described by  two   piecewise continuous   functions
$c_i=c_i(T, t),\; T, t>0, i=0,1$. 
At each instant $\tau_n\in\mathfrak{T}$
the particle takes the velocity mode $c_{\ep(\tau_{n})}(T_{n}, \cdot)$, where
$T_{n}$ is the (random) time spent by the particle at the previous state
before the last switching.
 We define 
a generalised telegraph process  $\mathcal T=\mathcal T(t),\; t\geq0$ 
driven by the velocity modes $c_0,\; c_1$ as follows,
\begin{equation}\label{def:tp}
\mathcal T(t)=\mathcal T(t; c_0, c_1)
=c_{\ep(\tau_{n})}(T_{n}, t-\tau_{n}),\quad\text{if~~}\tau_{n}\leq t<\tau_{n+1},\; n\geq0.
\end{equation}

The integral 
$\int_0^t\mathcal T(s)\rmd s$  represents the current particle's position,
$\int_0^t\mathcal T(s)\rmd s$ is named the \emph{integrated telegraph process}.

Denote by $N=N(t):=\max\{n\geq0:~\tau_n\leq t\},\; t\geq0$ 
 a counting Poisson process.
Integrated telegraph processes can be described in terms of the compound Poisson process as follows.

Let $\tau_{n-1}\leq s<t<\tau_n,\; n\geq1$. 
Under the given value $i=\ep(\tau_{n-1})=\ep(s)$ denote
 the distance passed by the particle in time interval $(s,\; t)$
without any reversal by $l_i(T; s, t)$,
\begin{equation}
\label{def:l}
l_i(T; s, t)=\int_s^tc_{i}\left(T, u-\tau_{n-1}\right)\rmd u.
\end{equation}
Simplifying notations we will write $l_i(T; t)$ instead of $l_i(T; 0, t)$.

If $N (t)=0$, i. e. $0=\tau_0\leq t<\tau_1$, and $\ep(0)=i$, then the particle's position is 
\begin{equation}
\label{def:itp0}
\int\limits_0^t\mathcal T(u)\rmd u=l_i(T_0;  t).
\end{equation}
If  $N(t)>0$, then 
\begin{equation}
\label{def:itp}
\int\limits_0^t\mathcal T(u)\rmd u=\sum_{n=1}^{N(t)}l_{\ep(\tau_n)}(T_{n-1}; \tau_{n-1}, \tau_{n})
+l_{\ep(\tau_{N(t)})}(T_{N(t)}; \tau_{N(t)}, t).
\end{equation}
Equalities \eqref{def:l}-\eqref{def:itp} define the integrated telegraph process.

Similarly,  the jump component can be constructed.
Let  $h_0=h_0(T)$ and $h_1=h_1(T),$ $T\geq0,$ 
be  a pair of deterministic piecewise continuous 
(or, at least, boundary measurable) functions. 
Consider piecewise constant telegraph processes based on $h_i(T)$ 
instead of $c_i=c_i(T, \cdot),\; i=0,1$, see  \eqref{def:tp}:
\[
\mathcal T(t; h_0, h_1)= h_{\ep(\tau_{n})}(T_{n}),\quad \text{if~~}
\tau_{n}<t\leq\tau_{n+1},\; n\geq0.
\]
We define an integrated jump process as the compound Poisson process,
\begin{equation}
\label{def:ijp}
\int\limits_0^t\mathcal T(u; h_0, h_1)\rmd N(u)
=\sum_{n=1}^{N(t)}h_{\ep(\tau_{n})}(T_n).
\end{equation}
The amplitude of the subsequent jump depends on the time 
spent by the particle in the current state.

Generalised integrated jump-telegraph process 
is sum of the integrated telegraph process
defined by \eqref{def:itp0}-\eqref{def:itp} and the jump component defined by \eqref{def:ijp}:
\begin{equation}\label{def:ijtp}
X(t)=\int\limits_0^t\mathcal T(u; c_0, c_1)\rmd u
+\int\limits_0^t\mathcal T(u; h_0, h_1)\rmd N(u),\quad t\geq0.
\end{equation}
We consider also the processes $X_i,\; i=0,1$, defined by \eqref{def:ijtp}
under the fixed initial state of $\ep(0)=i\in\{0, 1\}$.
So, $X_i(t)$ 
gives the position at time $t,\; t\geq0$ 
of the particle, which starts at the origin with velocity mode $c_i,\; i=\ep(0),$
and continues moving with the alternated at random times $\tau_n$
velocity regimes.
 Each velocity reversal is accompanied by jumps of random amplitude.

Conditioning on the first switching, we have the following equalities in distribution
(under the probability $\PP_0$ and $\PP_1$ respectively):
\begin{equation}
\label{eq:X0}
X_0(t)\mid_{\PP_0}\stackrel{D}{=}l_0(T_{0}; t)\1_{\{\tau_1>t\}}+\left[l_0(T_{0}; \tau_1)
+h_0(\tau_1)+\tilde X_1(t-\tau_1)\right]
\1_{\{\tau_1<t\}},
\end{equation}
where $T_0$ and $\tau_1$ have the distribution functions $F_1$ and $F_0$ respectively;
\begin{equation}
\label{eq:X1}
X_1(t)\mid_{\PP_1}\stackrel{D}{=}l_1(T_{0}; t)\1_{\{\tau_1>t\}}+\left[l_1(T_{0}; \tau_1)
+h_1(\tau_1)+\tilde X_0(t-\tau_1)\right]
\1_{\{\tau_1<t\}},
\end{equation}
where $T_0$ and $\tau_1$ are distributed in the opposite order, with distribution functions
 $F_0$ and $F_1$ respectively.
Here $\tilde X_i(t)$ is the integrated jump-telegraph process
starting with the velocity regime $c_i(T_1; \cdot),\; i=0, 1.$

The distributions of $X_0(t), X_1(t)$ and $X(t), \;t>0,$ are separated into 
the singular and the absolutely continuous parts.
All distributions will be described in terms of the conditional probabilities $\PP_i(\cdot~|~N(s)=0)$
under  the condition $\{N(s)=0\}=\{\tau_1>s\}$, see \eqref{barFbarf}.
Here $s, \; s\in[0, \tau_1)$ is the time when the observations begin.

For any Borelian set $B\subset(-\infty, \infty)$ consider 
\[\PP_i(B,t|s):=\PP_i(X(t)\in B~|~N(s)=0),\quad  i=0,1.\]

The singular part of the distribution $\PP_i(\cdot,t|s)$
corresponds to 
the first terms in the RHS of \eqref{eq:X0}-\eqref{eq:X1}, the movement without any  reversal, $N(t)=0.$
To describe the singular part, consider
 the linear functionals (generalised functions),
\[\begin{aligned}
\phi\to
\PP_i(\tau_1>t|N(s)=0)\EE_i\{\phi(l_i(\tau_1; t))\}
=&\bar F_i(t|s)\int\limits_0^\infty f_{1-i}(\tau)\phi(l_i(\tau; t))\rmd \tau,\\ &i=0, 1,
\end{aligned}\] 
on the space of (continuous) test-functions $\phi.$ 
The generalised function 
\begin{equation}
\label{p0}
p_i^0(x, t|s)=\bar F_i(t|s)\int_0^\infty f_{1-i}(\tau)\delta_{l_i(\tau; t)}(x)\rmd \tau,\quad i=0, 1
\end{equation}
can be viewed as the (conditional) distribution  ``density''. 
Here $\delta_a(x)$ is the Dirac measure (of unit mass) at point $a$.

Let 
\[
p_i(x, t|s)=\PP_i\left(X(t)\in\rmd x~|~N(s)=0\right)/\rmd x
\]
be the distribution densities  of $X_i(t),\;  i=0, 1$. 

By conditioning on the first velocity reversal we obtain the following system 
of integral equations
\begin{equation}\label{eq:p01}\begin{aligned}
p_0(x, t|s)=&p_0^0(x, t|s)\\
+&\int_0^\infty f_1(\tau)\rmd\tau\int_s^t  p_1(x-l_{0}(\tau; u)-h_{0}(u), t-u)f_0(u|s)\rmd u,\\
p_1(x, t|s)=&p_1^0(x, t|s)\\
+&\int_0^\infty f_0(\tau)\rmd\tau\int_s^t p_0(x-l_{1}(\tau; u)-h_{1}(u), t-u)f_1(u|s)\rmd u,\\
&t>s\geq0,\; x\in(-\infty, \infty),
\end{aligned}\end{equation}
where $p_i^0,\; i=0, 1,$ are defined by \eqref{p0}.

Then, systems similar to \eqref{eq:p01}  can be derived for the expectations.

Let $\mu_i(t|s):=\EE _i\{X(t)~|~N(s)=0\},\; t>s\geq0$ be the conditional expectation
with respect to $\PP_i(\cdot|N(s)=0),$
and $\mu_i(t):=\EE_i\{X(t)\}=\lim\limits_{s\downarrow0}\mu_i(t|s),$  $i=0, 1$.

One can  easily obtain, for $t>s$  
\[
\mu_i(t|s)=\bar F_i(t|s)\bar l_i(t)+\int_s^t\left(\bar l_i(u)+h_i(u)+\mu_{1-i}(t-u)\right)f_i(u|s)\rmd u,
\]
where
 $\bar l_i(\cdot)=\EE\{l_i(T;\cdot)\}=\int_0^\infty f_{1-i}(\tau)l_i(\tau;\cdot)\rmd\tau,\;  i=0, 1$.
 
Therefore, the conditional expectations $\mu_0(t|s)$ and $\mu_1(t|s)$
read as,
\begin{equation}
\label{eq:mu1}\begin{aligned}
\mu_0(t|s)=&a_0(t|s)+\int_s^t\mu_1(t-u)f_0(u|s)\rmd u,\\
\mu_1(t|s)=&a_1(t|s)+\int_s^t\mu_0(t-u)f_1(u|s)\rmd u.
\end{aligned}\end{equation}
Hence, the expectations $\mu_i=\mu_i(t)=\EE_i\{X(t)\},\; i=0, 1$ satisfy the following   Volterra-type system
\begin{equation}
\label{eq:mu1V}\begin{aligned}
\mu_0(t)=&a_0(t)+\int_0^t\mu_1(t-u)f_0(u)\rmd u,\\
\mu_1(t)=&a_1(t)+\int_0^t\mu_0(t-u)f_1(u)\rmd u.
\end{aligned}\end{equation}
Here
\[
a_i(t|s):=\bar F_i(t|s)\bar l_i(t)+\int_s^t(\bar l_i(u)+h_i(u))f_i(u|s)\rmd u,\quad t>s,
\]
and $a_i(t)=a_i(t|0),\; i=0, 1.$
Integrating by parts in the latter integral, we have 
\[
\int_s^t\bar l_i(u)f_i(u|s)\rmd u=-\bar F_i(t|s)\bar l_i(t)+\bar l_i(s)+\int_s^t\bar c_i(u)\bar F_i(u|s)\rmd u, 
\]
which leads to the following expression for $a_i(t|s)$:
\[
a_i(t|s)=\bar l_i(s)+\int_s^t\left(
\bar F_i(u|s)\bar c_i(u)+f_i(u|s)h_i(u)
\right)\rmd u.
\]
Since $\bar l_i(0)=0$ and $\bar F_i(t|0)=\bar F_i(t),\; f_i(t|0)=f_i(t)$, we get
\begin{equation}
\label{eq:a0}
a_i(t)=\int_0^t\left(
\bar F_i(u)\bar c_i(u)+f_i(u)h_i(u)
\right)\rmd u.
\end{equation}

Moreover, the equalities in   \eqref{barFbarf} and \eqref{eq:a0} lead to
\begin{equation}
\label{eq:a}\begin{aligned}
a_i(t|s)=&\bar l_i(s)+\bar F_i(s)^{-1}\int_s^t\left(
\bar F_i(u)\bar c_i(u)+f_i(u)h_i(u)
\right)\rmd u\\
=&\bar l_i(s)+\bar F_i(s)^{-1}\left(a_i(t)-a_i(s)\right),\quad i=0, 1.
\end{aligned}\end{equation}

Here we denote 
$\bar c_i(s)=\EE\{c_i(\cdot; s)\}=\int_0^\infty f_{1-i}(\tau)c_i(\tau; s)\rmd \tau$.

\begin{rem}\label{mu--0}
The Volterra system \eqref{eq:mu1V} 
has a unique solution, see e. g. \cite{linz}.
Moreover,   
$\mu_0(t)\equiv0$, $\mu_1(t)\equiv0$ 
if and only if  $a_0(t)\equiv0, a_1(t)\equiv0$, or equivalently,
\begin{equation}\label{eq:DoobMeyer}
\begin{aligned}
\bar F_0(t)\bar c_0(t)+h_0(t)f_0(t)=0\\
\bar F_1(t)\bar c_1(t)+h_1(t)f_1(t)=0
\end{aligned}\;,\qquad t\geq0,
\end{equation}
see \eqref{eq:a0}. 

Hence, $\mu_0(t)\equiv0$, $\mu_1(t)\equiv0$ if and only if 
the hazard rate functions of the spending time $T,\; T>0,$          see e.g. \cite{Cap},
 \begin{equation}
\label{def:hazard}
\alpha_i(t):=\frac{f_i(t)}{\bar F_i(t)}
\end{equation}
 are expressed by
$\alpha_i(t)=-\bar c_i(t)/h_i(t), \; t\geq0.$

Due to equations \eqref{eq:mu1} and \eqref{eq:a0}-\eqref{eq:a},
condition \eqref{eq:DoobMeyer} guarantees that 
\begin{equation}
\label{eq:muts}
\mu_0(t|s)=\bar l_0(s),\qquad \mu_1(t|s)=\bar l_1(s).
\end{equation}
\end{rem}

In some particular cases the solution of
\eqref{eq:mu1}  and \eqref{eq:mu1V} 
can be written explicitly.
Consider the following example. Let the alternated 
distributions of interarrival times are exponential:
\begin{equation}\label{def:exp}
f_i(t)=\lambda_i\exp(-\lambda_it),\quad t\geq0,\; i=0, 1.
\end{equation}
Hence $f_i(t|s)=\lambda_i\exp(-\lambda_i(t-s)),\; t>s\geq0.$
In this case the solution of system \eqref{eq:mu1V}  reads 
\begin{equation}
\label{eq:mu}
\boldsymbol{\mu}(t)
=\boldsymbol{a}(t)+\int_0^t\left(I+\varphi_\lambda(t-u)\Lambda\right)L\boldsymbol{a}(u)\rmd u,
\end{equation}
where 
\begin{equation}
\label{def:varphi}
\varphi_\lambda(t)=\frac{1-\rme^{-2\lambda t}}{2\lambda},\quad 2\lambda:=\lambda_0+\lambda_1.
\end{equation}
Here we use the matrix notations
$\boldsymbol{\mu}=(\mu_0, \mu_1)',$
$\boldsymbol{a}=(a_{0}, a_{1})'$, see \eqref{eq:a0},
 \[
 L=\begin{pmatrix}
0  & \lambda_0   \\
\lambda_1  &  0  
\end{pmatrix}\qquad \text{and}\qquad
\Lambda=\begin{pmatrix}
  -\lambda_0    &   \lambda_0  \\ 
  \lambda_1    &  -\lambda_1
\end{pmatrix}
.\]

To check it, notice that system \eqref{eq:mu1V} is equivalent to ODE with zero initial condition:
\[
\frac{\rmd \boldsymbol{\mu}(t)}{\rmd t}=\Lambda\boldsymbol{\mu}(t)+\boldsymbol{\psi}(t),\quad t>0,
\qquad\boldsymbol{\mu}(t)\mid_{t\downarrow 0}=\boldsymbol{0},
\]
where $\boldsymbol{\psi}=\dfrac{\rmd\boldsymbol{a} }{\rmd t}+(L-\Lambda)\boldsymbol{a}.$
We obtain this equation  differentiating in \eqref{eq:mu1V}
 with subsequent integration by parts.
Clearly, the equation is solved by 
\begin{equation}
\label{sol:mu}
\boldsymbol{\mu}(t)=\int_0^t\rme^{(t-u)\Lambda}\boldsymbol{\psi}(u)\rmd u.
\end{equation}
Integrating by parts in \eqref{sol:mu} we obtain 
\[
\boldsymbol{\mu}(t)=\boldsymbol{a}(t)+\int_0^t\rme^{(t-u)\Lambda}L\boldsymbol{a}(u)\rmd u.
\]
Since $\Lambda^2=-2\lambda\Lambda$,  the exponential of $t\Lambda$ is
\begin{equation*}\label{def:expL}
\exp\{t\Lambda\}={\rm I}+\varphi_\lambda(t)\Lambda=\frac{1}{2\lambda}
\begin{pmatrix}
 \lambda_1+\lambda_0\rme^{-2\lambda t}     &\quad   \lambda_0(1-\rme^{-2\lambda t})   \\ \\
 \lambda_1(1-\rme^{-2\lambda t})     & \quad  \lambda_0+\lambda_1\rme^{-2\lambda t}  
\end{pmatrix},
\end{equation*}
and then, we have \eqref{eq:mu}.

The explicit formulae for conditional expectations $\mu_i(t|s),\; i=0,1$ follow  directly
from \eqref{eq:mu1} and \eqref{eq:mu}.

Equations for variances 
$\sigma_i(t):=\mathrm{Var}\{X_i(t)\}=\EE\{\left(X_i(t)-\mu_i(t)\right)^2\},\; t>0,$ 
have the form, similar to \eqref{eq:mu1V}:
\begin{equation}
\label{eq:var}\begin{aligned}
\sigma_0(t)=&b_0(t)+\int_0^t\sigma_1(t-u)f_0(u)\rmd u,\\
\sigma_1(t)=&b_1(t)+\int_0^t\sigma_0(t-u)f_1(u)\rmd u,
\end{aligned}\end{equation}
where
\[\begin{aligned}
b_i(t):=&\bar F_i(t)\left(\bar l_i(t)-\mu_i(t)\right)^2\\
+&\int_0^t\left(\bar l_i(u)+h_i(u)+\mu_{1-i}(t-u)-\mu_i(t)\right)^2f_i(u)\rmd u,\quad
 i=0, 1.
\end{aligned}
\]
In the special case of exponential distributions \eqref{def:exp}
the solution of \eqref{eq:var} reads similar to \eqref{eq:mu}.

 More specifically, the solution of \eqref{eq:var} is given by
\begin{equation}\label{bfsigma}
\boldsymbol{\sigma}=\boldsymbol{b}(t)+\int_0^t(I+\varphi_\lambda(t-u)\Lambda)L\boldsymbol{b}(u)\rmd u,
\end{equation}
where $\boldsymbol{\sigma}=(\sigma_0, \sigma_1)',\; \boldsymbol{b}=(b_0, b_1)'$. We use also 
the notations of \eqref{eq:mu} and \eqref{def:varphi}.

\begin{rem}
Let $0= \tau_0< \tau_1< \tau_2<\ldots$ be a Poisson univariate point process
 with deterministic constant intensity $2\lambda,\; \lambda>0$.
 Let $Y_n, n\geq1$ be a sequence of the i. i. d. random variables with two values, $Y_n\in\{0, 1\}$.
 We may then consider two counting processes
 \[
 N^{(0)}(t):=\sum_{n\geq1}\1_{\{\tau_n\leq t\}}\1_{\{Y_n=0\}},\quad
 N^{(1)}(t):=\sum_{n\geq1}\1_{\{\tau_n\leq t\}}\1_{\{Y_n=1\}}.
 \]
 It is easy to see that  $ N^{(i)}(t),\; t\geq0$ is a univariate point process with intensity $\lambda_i,\; i=0, 1.$
Here $\lambda_0=2\lambda(1-p)$ and $\lambda_1=2\lambda p,$ where $p=\PP(Y_n=1).$
In the special case \eqref{def:exp} the Markov flow $\mathfrak T$ is the bivariate point process $(N^{(0)}(t), N^{(1)}(t))$.
See \cite{Bremaud}.
\end{rem}

\section{Martingales}
\label{sec:mart}
\setcounter{equation}{0}

Let $X=X(t)$  be integrated jump-telegraph process
defined  by \eqref{def:ijtp}
on the filtered probability space $(\Omega, \mathcal{F}, \{\mathcal{F}_t\}_{t\geq0}, \PP)$.

\begin{theo}\label{theo1}
Process $X$  
is $\mathcal{F}_t$-martingale if and only if \eqref{eq:DoobMeyer} holds.
\end{theo}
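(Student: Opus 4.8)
I would argue the two implications separately.

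\emph{Necessity.} Since $\ep(0)$ is $\mathcal F_0$-measurable and $X(0)=0$, if $X$ is an $\mathcal F_t$-martingale then, conditioning on $\ep(0)=i$ (restarting $X$ at a switching instant if some initial state has zero probability), one gets $\mu_i(t)=\EE_i\{X(t)\}=\EE_i\{X(0)\}=0$ for $i=0,1$ and every $t\ge0$. By Remark \ref{mu--0}, the identities $\mu_0\equiv\mu_1\equiv0$ are equivalent to the system \eqref{eq:DoobMeyer}. This direction only uses the uniqueness of solutions of the Volterra system \eqref{eq:mu1V}.

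\emph{Sufficiency.} Assume \eqref{eq:DoobMeyer}. The plan is a Doob--Meyer type decomposition of the finite-variation semimartingale $X=A+J$, where $A(t)=\int_0^t\mathcal T(u;c_0,c_1)\,\rmd u$ is continuous and adapted and $J$ is the pure-jump part \eqref{def:ijp}. Since the switching instants are governed by the hazard rates \eqref{def:hazard}, the $\mathcal F_t$-predictable intensity of a switch out of the current state $i$ at age $a=u-\tau_{N(u)}$ is $\alpha_i(a)=f_i(a)/\bar F_i(a)$, and the accompanying jump has size $h_i(a)$; hence the predictable compensator $\widehat J$ of $J$ is absolutely continuous with density $\alpha_{\ep(u)}(a)\,h_{\ep(u)}(a)$, while the density of the continuous part $A$ is the matching velocity term. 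The system \eqref{eq:DoobMeyer}, rewritten as $\bar c_i=-h_i f_i/\bar F_i=-h_i\alpha_i$ for $i=0,1$, says precisely that these two densities cancel, so that $X=J-\widehat J$ is a purely discontinuous local martingale.

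It then remains to upgrade ``local martingale'' to ``martingale'', for which it suffices to check $\EE_i\{|X(t)|\}<\infty$ for all $t$ and $i$; this follows from Volterra-type equations for the moments (cf. \eqref{eq:mu1V} and \eqref{eq:var}), uniquely solvable under the standing piecewise-continuity and local-integrability hypotheses on $c_0,c_1,h_0,h_1$ and $F_0,F_1$. Equivalently, and more in the spirit of Section \ref{sec:2}, one can verify $\EE\{X(t)\mid\mathcal F_s\}=X(s)$ directly: conditioning on the last switch $\tau_{N(s)}\le s$ before $s$, with $i=\ep(s)$ and age $a=s-\tau_{N(s)}$, the strong Markov property of $(\ep,\mathfrak T)$ gives $\EE\{X(t)\mid\mathcal F_s\}=X(\tau_{N(s)})+\mu_i(t-\tau_{N(s)}\mid a)$, where $\mu_i(\cdot\mid\cdot)$ is the conditional mean of \eqref{eq:mu1} with the observation starting at age $a$, see \eqref{barFbarf}; under \eqref{eq:DoobMeyer}, relation \eqref{eq:muts} of Remark \ref{mu--0} collapses this back to $X(s)$.

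The main obstacle is the sufficiency part, and within it the careful handling of the path-dependence: the current velocity regime $c_i(T_{N(s)},\cdot)$ and the size of the next jump both depend on the previously spent time $T_{N(s)}$, so the predictable compensator of $J$ and the strong Markov reduction must be set up with this dependence in mind; one also needs the integrability bound that turns the compensated process into a genuine martingale. Everything else is routine bookkeeping with \eqref{barFbarf}, \eqref{eq:mu1}, \eqref{eq:DoobMeyer} and \eqref{eq:muts}.
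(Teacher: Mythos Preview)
Your necessity argument is correct and matches the paper's: both reduce to $\mu_0\equiv\mu_1\equiv0$ and appeal to Remark~\ref{mu--0}. Your second route for sufficiency---conditioning on the last switch, using the strong Markov property, and invoking \eqref{eq:muts}---is also the paper's approach, stated there more tersely.

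Your first route, the compensator argument, is genuinely different from the paper but contains a real gap. At time $u$, with $\ep(u)=i$ and age $a=u-\tau_{N(u)}$, the drift density of the continuous part $A$ is the \emph{actual} velocity
\[
\mathcal T(u;c_0,c_1)=c_i\bigl(T_{N(u)},\,a\bigr),
\]
which depends on the realised previous holding time $T_{N(u)}$ (an $\mathcal F_u$-measurable quantity); it is \emph{not} the averaged velocity $\bar c_i(a)=\int_0^\infty f_{1-i}(\tau)\,c_i(\tau,a)\,\rmd\tau$. Condition~\eqref{eq:DoobMeyer}, equivalently $\bar c_i(a)+\alpha_i(a)h_i(a)=0$, balances the compensator density $\alpha_i(a)h_i(a)$ of $J$ against $\bar c_i(a)$, not against $c_i(T_{N(u)},a)$. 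Hence $A$ and $\widehat J$ do \emph{not} cancel pathwise, and the identification ``$X=J-\widehat J$'' fails. Your cancellation would go through only when $c_i(T,t)$ does not depend on its first argument~$T$, which is precisely the dependence the paper is set up to allow. The paper sidesteps this by working throughout with the conditional means $\mu_i(t\mid s)$ of \eqref{eq:mu1}, in which the first argument of $c_i$ has already been integrated out against $f_{1-i}$, rather than with a pathwise predictable compensator. You yourself flag the $T_{N(s)}$-dependence as ``the main obstacle''; the compensator sketch does not resolve it.
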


\begin{proof}
We need to show that \eqref{eq:DoobMeyer} is necessary and sufficient for 
\begin{equation}
\label{eq:mart}
\EE\left\{X(t)~|~\mathcal{F}_{s}\right\}=X(s),\quad   0<s<t.
\end{equation}

First, notice that
for any stopping times $t$ and $s$ such that, $s<t$, 
we have 
\[
\EE_i\{X(t)-X(s)~|~\F_{s}\}
=\EE_i\left\{\int_{s}^{t}\mathcal T(u; c_0, c_1)\rmd u+
\sum_{k=N(s)+1}^{N(t)}
h_{\ep(\tau_k)}(T_k)~|~\F_{s}\right\}
\]
\[
=\EE_i\left\{
\int_0^{t-s}\mathcal T(s+u)\rmd u+
\sum_{k=1}^{N(t)-N(s)}h_{\ep(\tau_{k+N(s)})}(T_{k+N(s)})~|~\F_{s}\right\}
\]
 on the set $0\leq s\leq t$.
 
Let $s,\; s\geq 0$ be a switching time, $s=\tau_n$
(in this case the proof is similar to \cite{R13}).
According to the Markov property by  definition of the processes 
$\ep=\ep(t),\; N=N(t)$ and $\tau_k$ 
we have the following identities in (conditional) distribution
\[\begin{aligned}
\ep(\tau_n+u)|_{\{\ep(\tau_n)=i\}}\stackrel{D}{=}
&\tilde \ep(u)|_{\{\tilde\ep(0)=i\}},\quad   &u\geq0,\\
N(t)|_{\{\ep(\tau_n)=i\}}\stackrel{D}{=}
& n+\tilde N(t-\tau_n)|_{\{\tilde\ep(0)=i\}},\quad    &t\geq \tau_n\geq0,\\
\tau_{k+n}|_{\{\ep(\tau_n)=i\}}\stackrel{D}{=}&\tilde \tau_k|_{\{\tilde\ep(0)=i\}},\;
T_{k+n}|_{\{\ep(\tau_n)=i\}}\stackrel{D}{=}\tilde T_k|_{\{\tilde\ep(0)=i\}},& \quad k\geq0,
\end{aligned}\]
where $\tilde \ep(s),\; \tilde N(s),\; \tilde\tau_k$ and $\tilde T_k$ 
are copies of 
$\ep(s),\; N(s),\; \tau_k$ and $T_k$ respectively,
independent of $\mathcal F_{\tau_n}$. Hence, we obtain
\begin{equation*}\label{eq:EE}
\EE\{X(t)-X(\tau_n)~|~\F_{\tau_n}\}
= \EE_i\{\tilde X(t-\tau_n)\},
\end{equation*}
if $\ep(\tau_n)=i$.
Here $\tilde X$ denotes the integrated jump-telegraph process, 
which is based on  $\tilde \ep,\; \tilde N,\; \tilde\tau_k$ and $\tilde T_k$,
 starting from the state $\tilde\ep(0)=i$.
 The latter expectation is equal to zero,
$\EE_i\{\tilde X(t-\tau_n)\}\equiv0$, if and only  if
\eqref{eq:DoobMeyer} holds. Thus, equality \eqref{eq:mart} is proved, if $s$ is the  
switching time, $s=\tau_n$.

In general, for any $s,\; s<t$ the martingale property \eqref{eq:mart} is proved by using \eqref{eq:muts},
Remark \ref{mu--0}.
\end{proof}

\begin{rem}\label{rem:signs}
Notice that if $X$ is the martingale, so
identities
\eqref{eq:DoobMeyer} hold, then the direction of each  jump should be opposite 
to the respective \textup{(}mean\textup{)} velocity value.
\end{rem}

\begin{rem}
In the special case of process $X$
 with exponential distributions of interarrival times \eqref{def:exp}
the set of equalities \eqref{eq:DoobMeyer} is equivalent to 
\[
\bar c_0(t)+\lambda_0h_0(t)=0,\qquad
\bar c_1(t)+\lambda_1h_1(t)=0,
\]
see also Theorem 1 in \cite{R2007}.
\end{rem}

\begin{theo}\label{cor}
Let the jump-telegraph process $X$ 
be defined by \eqref{def:ijtp}\textup{,}
 and $h_i\neq0,$  $ i=0, 1$.
If $X$ is the martingale\textup{,}  then 
\begin{align}\label{cond:DM}
\frac{\bar c_i(t)}{h_i(t)}<&0 \qquad\forall t>0,\\
\label{cond:dens}
\int_0^\infty\frac{\bar c_i(s)}{h_i(s)}\rmd s=&-\infty, \quad i=0, 1.
\end{align}

Moreover,
$X$ is the martingale, if and only if the 
 the hazard rate functions $\alpha_i(t)$ \textup{(}\,see the definition in \eqref{def:hazard}\textup{)}
of interarrival times are expressed by 
\begin{equation}\label{eq:hazard}
\alpha_i(t)=-\bar c_i(t)/h_i(t), \; t\geq0.
\end{equation}
Therefore,
the distribution densities of interarrival times  
 satisfy the following  set of integral equations\textup{:}
\begin{equation}\label{eq:fi}
f_i(t)=\alpha_i(t)\exp\left\{-\int_0^t\alpha_i(s)\rmd s
\right\}
\equiv-\frac{\bar c_i(t)}{h_i(t)}\exp\left\{\int_0^t\frac{\bar c_i(s)}{h_i(s)}\rmd s\right\},\qquad  t>0,\; i=0, 1.
\end{equation}
\end{theo}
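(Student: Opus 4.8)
The plan is to extract everything from Theorem \ref{theo1} and Remark \ref{mu--0} together with the elementary theory of hazard rates, treating the three assertions in the order: the ``if and only if'' characterisation \eqref{eq:hazard} first, then \eqref{cond:DM}--\eqref{cond:dens}, then \eqref{eq:fi}. By Theorem \ref{theo1}, $X$ is an $\F_t$-martingale if and only if the two identities \eqref{eq:DoobMeyer} hold, i.e. $\bar F_i(t)\bar c_i(t)+h_i(t)f_i(t)=0$, $i=0,1$. Since $h_i\neq0$ and $\bar F_i(t)>0$ on the support of $T$, one may divide by $h_i(t)\bar F_i(t)$ and rewrite this as $f_i(t)/\bar F_i(t)=-\bar c_i(t)/h_i(t)$; recalling the definition \eqref{def:hazard} of the hazard rate $\alpha_i=f_i/\bar F_i$, this is exactly \eqref{eq:hazard}. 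That settles the equivalence, and it is the same computation already recorded in Remark \ref{mu--0}.

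Next I would deduce \eqref{cond:DM}. The hazard rate $\alpha_i(t)=f_i(t)/\bar F_i(t)$ is non-negative, being the ratio of a density and a survival function, and since $f_i$ is a genuine probability density it is strictly positive on its support; hence \eqref{eq:hazard} forces $-\bar c_i(t)/h_i(t)>0$, that is $\bar c_i(t)/h_i(t)<0$ for every $t>0$. This is the quantitative form of Remark \ref{rem:signs}: each jump opposes the mean velocity. For \eqref{cond:dens} I would invoke the classical link between the hazard rate and the survival function. From $\alpha_i=f_i/\bar F_i=-(\log\bar F_i)'$ and $\bar F_i(0)=1$ one integrates to obtain $\bar F_i(t)=\exp\{-\int_0^t\alpha_i(s)\,\rmd s\}$. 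The non-explosion condition $\tau_\infty=+\infty$ a.s.\ (equivalently, each interarrival time is a.s.\ finite) gives $\bar F_i(t)\downarrow0$ as $t\to\infty$, which is possible only if $\int_0^\infty\alpha_i(s)\,\rmd s=+\infty$; substituting \eqref{eq:hazard} turns this into $\int_0^\infty \bar c_i(s)/h_i(s)\,\rmd s=-\infty$, i.e.\ \eqref{cond:dens}.

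Finally, \eqref{eq:fi} is a substitution: inserting $\bar F_i(t)=\exp\{-\int_0^t\alpha_i(s)\,\rmd s\}$ into $f_i(t)=\alpha_i(t)\bar F_i(t)$ yields the first equality, and replacing $\alpha_i(t)$ by $-\bar c_i(t)/h_i(t)$ from \eqref{eq:hazard} yields the second.

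The only genuinely delicate points are the strictness in \eqref{cond:DM} and the divergence in \eqref{cond:dens}; both rest on the tacit regularity of the interarrival laws --- that $f_i$ is strictly positive on its support and that $\bar F_i(t)>0$ for all $t>0$ while $\bar F_i(t)\to0$. Under these standing assumptions the argument is routine; without them one can claim only $\bar c_i/h_i\le0$ and that $\int\alpha_i$ diverges over the support of $T$.
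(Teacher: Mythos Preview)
Your proof is correct and follows essentially the same route as the paper: invoke Theorem~\ref{theo1} to reduce the martingale property to \eqref{eq:DoobMeyer}, divide by $h_i(t)\bar F_i(t)$ to obtain the hazard-rate identity \eqref{eq:hazard}, integrate $\alpha_i=-(\log\bar F_i)'$ to recover $\bar F_i$ and hence \eqref{eq:fi}, and read off \eqref{cond:DM} and \eqref{cond:dens} from the positivity of $\alpha_i$ and the fact that $\bar F_i(t)\to0$. Your explicit remarks on the regularity needed for strict inequality and divergence are a welcome addition but do not constitute a different argument.
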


\begin{proof}
Equations \eqref{eq:hazard} are derived, see Theorem \ref{theo1}. By these equations
\begin{equation}\label{barch}
-\frac{\bar c_i(t)}{h_i(t)}=\alpha_i(t)=\frac{f_i(t)}{\bar F_i(t)}\equiv -(\ln \bar F_i(t))',\quad
t\geq0,\; i=0, 1.
\end{equation}
Thus the survival probability is
\[
\bar F_i(t)=\exp\left\{\int_0^t\frac{\bar c_i(s)}{h_i(s)}\rmd s\right\},\qquad t\geq0,\; i=0, 1.
\]
So, the density is given by  \eqref{eq:fi}. 

Inequality \eqref{cond:DM}  follows   from \eqref{eq:DoobMeyer}.
Notice that by definition $\lim\limits_{t\to+\infty}\bar F_i(t)=0,$    
 hence  \eqref{cond:dens} is valid.\end{proof}

Consider  the following example.
Assume that functions $\bar c_i(t)$ and $h_i(t)$ are proportional\textup{:}
\begin{equation}\label{eq:barchlambda}
\frac{\bar c_i(t)}{h_i(t)}\equiv-\lambda_i,\qquad \lambda_i>0,\; i=0, 1.
\end{equation}
Therefore, by \eqref{eq:fi} 
 the respective  integrated jump-telegraph process
is the martingale if the
 distributions of interarrival times are \emph{exponential} 
 with densities $f_i(t)=\lambda_i\exp(-\lambda_it),$  $t>0,\; i=0, 1.$ 
 
Identities \eqref{eq:barchlambda} can be written in detail as follows.
Let $X$  be the jump-telegraph process with regimes of velocities $c_0, c_1$
and the regimes of jumps $h_0, h_1$, which
are connected by means of the relations
\[
\lambda_{1}\int_0^\infty\rme^{-\lambda_{1}\tau}c_0(\tau, t)\rmd \tau
=-\lambda_0h_0(t),\qquad
\lambda_{0}\int_0^\infty\rme^{-\lambda_{0}\tau}c_1(\tau, t)\rmd \tau
=-\lambda_1h_1(t).
\]
Here  $\lambda_0$ and $\lambda_1$ are some positive constants. 
Hence the jump-telegraph process
$X$ is the martingale with exponentially distributed interarrival times.
Parameters of these alternated exponential distributions are $\lambda_0$ and $\lambda_1$.

Equations \eqref{eq:barchlambda}
permit to interpret 
the switching intensities $\lambda_0$ and $\lambda_1$  
by using the (observable)  proportion between velocity  and jump values. 
On the other hand, if the average velocity regimes are given,
$\bar c_0$ and $\bar c_1$,   and $X_0$ and $X_1$ are martingales, then  
we can observe the details of comportment of process $X=X(t)$. 
For example, the martingale possesses small jumps  with high frequency, while the big jumps are rare.
The direction of jump should be opposite to the velocity sign, see  also Remark \ref{rem:signs}.

Other useful examples are presented in \cite{R13}, see Examples 1-4, pp.2289-2290.

\begin{prop}\label{prop}
Let $\mathfrak{T}=\{\tau_n\}$ be 
the Markov flow of switching times,
and  $X$ 
be a jump-telegraph process defined by  \eqref{def:ijtp}.
Suppose  that the increments
 $T_n=\tau_n-\tau_{n-1},\; n\geq1$
are exponentially distributed with alternated 
parameters $\mu_0, \mu_1>0$. 

Assume  that  the velocity regimes $c_i=c_i(T, t)$ and the
 jump amplitudes $h_i=h_i(t)$ are  proportional
satisfying \eqref{eq:barchlambda} 
 with some positive coefficients $\lambda_0$ and $\lambda_1$.
 
 Therefore the martingale measure for $X$  exists and it is unique.
 Under the martingale measure the interarrival times are exponential with parameters 
 $\lambda_0$ and $\lambda_1$.
\end{prop}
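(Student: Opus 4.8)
The plan is to establish existence by exhibiting the candidate measure explicitly and checking the martingale condition via Theorem~\ref{theo1}, and then to obtain uniqueness by pinning down the interarrival laws of any equivalent martingale measure through Theorem~\ref{cor} and Remark~\ref{mu--0}.

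\textbf{Existence.} I would let $\QQ$ be the measure, equivalent to $\PP$ on every $\F_t$, under which the increments $T_n$ are exponential with alternated parameters $\lambda_0,\lambda_1$. Such a $\QQ$ is well defined and equivalent to $\PP$ on each $\F_t$: up to time $t$ there are a.s.\ finitely many completed holding intervals plus one incomplete one, and the Radon--Nikodym density is the product of the likelihood ratios $\lambda_j\rme^{-\lambda_jT}/(\mu_j\rme^{-\mu_jT})$ over the completed intervals (with $j\in\{0,1\}$ the state occupied during the interval) times the survival ratio $\rme^{-\lambda_j(t-\tau_{N(t)})}/\rme^{-\mu_j(t-\tau_{N(t)})}$ for the last one; this is a.s.\ strictly positive and a $\PP$-martingale of unit mean (a standard rate change for the underlying point process, see \cite{Bremaud,Protter}). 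Under $\QQ$ the hazard rates are the constants $\alpha_i(t)\equiv\lambda_i$, while the hypothesis \eqref{eq:barchlambda}, written in the detailed form $\lambda_{1-i}\int_0^\infty\rme^{-\lambda_{1-i}\tau}c_i(\tau,t)\,\rmd\tau=-\lambda_ih_i(t)$, says precisely that the average velocity computed under $\QQ$ equals $\bar c_i(t)=-\lambda_ih_i(t)$. Hence $\bar F_i(t)\bar c_i(t)+h_i(t)f_i(t)=\rme^{-\lambda_it}(-\lambda_ih_i(t))+h_i(t)\lambda_i\rme^{-\lambda_it}=0$, i.e.\ the Doob--Meyer identities \eqref{eq:DoobMeyer} hold under $\QQ$, and Theorem~\ref{theo1} yields that $X$ is a $\QQ$-martingale.

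\textbf{Uniqueness.} Let $\QQ'$ be any measure equivalent to $\PP$ under which $X$ is a martingale; in the present setting $\QQ'$ differs from $\PP$ only through the interarrival distributions, say with densities $f_i'$, survival functions $\bar F_i'$ and hazard rates $\alpha_i'=f_i'/\bar F_i'$. Since \eqref{eq:barchlambda} forces $h_i\neq0$, Theorem~\ref{cor} applies and shows that the martingale property is equivalent to $\alpha_i'(t)=-\bar c_i'(t)/h_i(t)$, that is, to the coupled system $\bar F_i'(t)\bar c_i'(t)+h_i(t)f_i'(t)=0$ with $\bar c_i'(t)=\int_0^\infty f_{1-i}'(\tau)c_i(\tau;t)\,\rmd\tau$. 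Using \eqref{eq:barchlambda} to substitute $h_i(t)=-\lambda_i^{-1}\int_0^\infty\lambda_{1-i}\rme^{-\lambda_{1-i}\tau}c_i(\tau;t)\,\rmd\tau$, I would argue that the exponential pair $\bar F_i'(t)=\rme^{-\lambda_it}$ is the only solution of this system, so that, by \eqref{eq:fi}, $\QQ'$ necessarily has exponential interarrival times with parameters $\lambda_0,\lambda_1$ and therefore $\QQ'=\QQ$; alternatively one may invoke the completeness of the model (a single driving source of randomness, hence a predictable representation) to conclude at once that the equivalent martingale measure is unique.

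The delicate point is the uniqueness step. The obstacle is that $\bar c_i'$ in the Doob--Meyer condition is itself an average over the still-unknown law of the holding time in state $1-i$ under $\QQ'$, so the martingale requirement is a self-referential integral system in $(f_0',f_1')$ rather than a pointwise constraint; ruling out non-exponential solutions is exactly where the proportionality assumption \eqref{eq:barchlambda} (or, equivalently, the completeness of the market) must be exploited, in combination with the uniqueness of solutions of Volterra-type systems recorded in Remark~\ref{mu--0}.
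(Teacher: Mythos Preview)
Your existence argument is correct and is the same as the paper's: both construct $\QQ$ by a Girsanov change of intensities from $\mu_i$ to $\lambda_i$ and then verify \eqref{eq:DoobMeyer} under $\QQ$ via Theorem~\ref{theo1}/Theorem~\ref{cor}. The only cosmetic difference is that the paper packages the Radon--Nikodym density as the stochastic exponential \eqref{def:RN} of an auxiliary jump-telegraph process with constant parameters $c_i^*=\mu_i-\lambda_i$ and $h_i^*=-c_i^*/\mu_i$ (invoking \cite{R2007} for the intensity change), whereas you write it directly as the product of exponential likelihood ratios; unwinding the exponential shows these coincide.

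On uniqueness the paper's proof is in fact silent: it only exhibits $\QQ$ and checks the martingale property. Your attempt therefore goes beyond the paper, and you correctly isolate the obstruction: in \eqref{barch} under a putative $\QQ'$ the quantity $\bar c_i'$ is itself an average against the unknown $f_{1-i}'$, so \eqref{eq:barchlambda} does not immediately force exponentiality of $(f_0',f_1')$. There is one further point you pass over: a general measure equivalent to $\PP$ on the point-process filtration may give the compensator a \emph{predictable stochastic} density rather than merely replace $f_i$ by another deterministic $f_i'$, so the sentence ``$\QQ'$ differs from $\PP$ only through the interarrival distributions'' already requires justification. Your fallback via predictable representation (single driving point process, hence completeness) is the standard way to close this, but it is not carried out here---nor, to be fair, in the paper.
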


\begin{proof} 
For the integrated jump-telegraph process defined by \eqref{def:ijtp} we define 
the Radon-Nikodym derivative of the form, see \cite{R2007}
\begin{equation}\label{def:RN}
\frac{\rmd \QQ}{\rmd \PP}
=\exp\left\{\int_0^t\mathcal T(u; c_0^*, c_1^*)\rmd u\right\}
\kappa^*(t).
\end{equation}
Here $X^*=X^*(t)$ is the jump-telegraph process  driven by the   Markov flow $\mathfrak{T}$
(with parameters $\mu_0>0$ and $\mu_1>0$). 
Process $X^*$ is defined by the  constant velocities 
$c_0^*=\mu_0-\lambda_0$ and $c_1^*=\mu_1-\lambda_1$ and
the constant jump parameters $h_0^*=-c_0^*/\mu_0, \; h_1^*=-c_1^*/\mu_1.$ 
The jump part
$\kappa^*(t)=\prod_{n=1}^{N(t)}(1+h_{\ep(\tau_{n-1})}^*)$ follows from the exponential formula, \cite{JDM},
and $\int_0^t\mathcal T(u; c_0^*, c_1^*)\rmd u$ is the integrated telegraph process. 

Since Theorem 2 and Theorem 3 in   \cite{R2007},  under the new measure $\QQ$
the underlying Markov flow takes the intensities $\lambda_i$ instead of $\mu_i,\; i=0, 1$, 
see also \cite{cheang} (Lemma1) and \cite{JDM} (Theorem 2.5).

Since, due to  \eqref{eq:barchlambda}, condition \eqref{eq:DoobMeyer}
is fulfilled. By Theorem \ref{cor} the process $X(t)$ becomes the $\QQ$-martingale. 
\end{proof}
  
\section{Market model and fundamental equation}\label{sec:3}
\setcounter{equation}{0}    

Let the price process $S=S(t)$  be  defined by stochastic exponential of  
generalised jump-telegraph process $X$. 
The velocity and jump regimes are established in accordance with  time 
spent by the process in the previous state (see the definition in  \eqref{def:ijtp}). 
 
 Precisely, let
  $\ep=\ep(t)\in\{0, 1\},\; t\in[0, U]$ be the Markov process describing  
the evolution of market states.
Let $\mathfrak{T}=\{\tau_n\}$ be 
the  flow of switching times.
Consider the integrated jump-telegraph process $X=X(t)$ based on $\ep$ and $\mathfrak T$,
which is defined by 
 \eqref{def:ijtp}
with  velocity regimes $c_0=c_0(T, t),\; c_1=c_1(T, t)$ and
jump amplitudes  $h_0(T),\; h_1(T)>-1, \;\forall T\geq0$. 

Consider a market model of one risky asset associated with price process $S=S(t)$, 
which is the stochastic exponential, 
\begin{equation}
\label{def:S}
S(t)=S(0)\mathcal{E}_t\{X\}
=S(0)\exp\left\{\int_0^t\mathcal T(s; c_0, c_1)\rmd s\right\}
\kappa(t),\quad t\in[0, U].
\end{equation}
Here 
$\kappa(t)=\prod_{n=1}^{N(t)}(1+h_{\ep(\tau_{n-1})}(T_n))$ 
is the jump component
of stochastic exponential $\mathcal{E}_t\{X\}$, 
see the exponential formula of Stieltjes-Lebesgue calculus
(\cite{Bremaud}, Theorem T4 in Appendix A4;
see also  \cite{JDM}, formula (17)).

Let $r_0=r_0(T, t)\geq0,\; r_1=r_1(T, t)\geq0,\; T, t\geq0$ be 
piecewise continuous  deterministic functions.
The bond price is assumed to be 
\begin{equation}
\label{def:B}
B(t)=\exp\left\{\int_0^t\mathcal{T} (u; r_0, r_1)\rmd u\right\}.
\end{equation}
Here $\mathcal T (\cdot; r_0, r_1)$ 
is  the telegraph process  driven by the same Markov process $\ep$
(see \eqref{def:tp}) and
$r_0,\; r_1$ are the interest rate functions. Thus,
the discounted price process is of the same structure as $S(t)$, \eqref{def:S},
\[
B(t)^{-1}S(t)
=S(0)\exp\left\{\int_0^t\mathcal T(u; c_0-r_0, c_1-r_1)\rmd u\right\}
\kappa (t).
\]
Hence, without loss of generality we assume   the interest rates to be 0.

Let $\QQ$ be the martingale measure for process $S$,    \eqref{def:S}.

Consider an option with the payoff function 
$\mathcal{H}=\mathcal{H}(x),\; \mathcal H(x)\geq0$ at the maturity time $U,\; U>0$.

Let $A_{i}(t|s, \rmd s):=\{\ep(t)=i,\; t-\tau_{N(t)}\in(s, s+\rmd s)\},  t\in(0, U),$ $i=0, 1.$
Here $s\in(0, t),$ and $\tau_{N(t)}$ is  the last switching time. 
Notice that  $A_{i}(t|s, \rmd s)\in\mathcal{F}_t.$  
Consider the functions
\begin{equation*}
\label{def:Phitsds}
\begin{aligned}
\Phi_i(x, t|s, \rmd s)=&\EE_\QQ\left\{
\mathcal{H}(x\rme^{\int_t^U\mathcal T(u; c_0, c_1)\rmd u}\kappa(U)/\kappa(t))
~|~A_{i}(t|s, \rmd s)
\right\}\QQ(A_i(t|s, \rmd s))\\
=&\int\limits_{A_i(t|s, \rmd s)}
\mathcal{H}(x\rme^{\int_t^U\mathcal T(u; c_0, c_1)\rmd u}\kappa(U)/\kappa(t))
\rmd \QQ,\\
&0\leq s<t\leq U,\; i=0, 1,
\end{aligned}\end{equation*}
see \eqref{barFbarf}.
Further, let
$\Phi_i(x, t|s)=\lim\limits_{\rmd s\downarrow0}\Phi_i(x, t|s, \rmd s).$ 

Notice that the strategy value at time $t\in(0, U)$ equals to
\[V(t|s)=\Phi_{\ep(t)}(S(t), t|s),\]
where $s=t-\tau_{N(t)}$ is the elapsed time since the last switching.

Conditioning on the first reversal after time $t$,
 we  see the explicit expressions for functions $\Phi_0(x, t|s)$ and $\Phi_1(x, t|s)$,
\begin{equation}
\label{def:Phits}
\begin{aligned}
\Phi_0(x, t|s)=&\bar F_0(U-t+s)\EE\left\{\mathcal{H}(x\rme^{l_0(\tau; t, U)})\right\}\\
+&\EE\left\{\int_t^Uf_0(u-t+s)\mathrm\Phi_1(x\rme^{l_0(\tau; t, u)}(1+h_0(u-t+s)), u)\rmd u\right\},\\
\Phi_1(x, t|s)=&\bar F_1(U-t+s)\EE\left\{\mathcal{H}(x\rme^{l_1(\tau; t, U)})\right\}\\
+&\EE\left\{\int_t^Uf_1(u-t+s)\mathrm\Phi_0(x\rme^{l_1(\tau; t, u)}(1+h_1(u-t+s)), u)\rmd u\right\}.
\end{aligned}\end{equation}
Here $\mathrm\Phi_0(\cdot, t)$ and $\mathrm\Phi_1(\cdot, t)$ are defined by
$\mathrm\Phi_i(x, t)=\lim\limits_{s\downarrow0}\Phi_i(x, t|s),\; i=0,1$.
Functions $\mathrm\Phi_i(\cdot, t),\; i=0, 1$ correspond to the market process initiated 
exactly at the switching time.

Finally, notice that   functions $\mathrm\Phi_0(\cdot, t)$ and $\mathrm\Phi_1(\cdot, t)$
solve the following Volterra system:
\begin{equation}
\label{def:Phit}
\begin{aligned}
\mathrm\Phi_0(x, t)=&\bar F_0(U-t)\EE\left\{\mathcal{H}(x\rme^{l_0(\tau; t, U)}\right\}\\
+&\EE\left\{\int_t^Uf_0(u-t)\mathrm\Phi_1(x\rme^{l_0(\tau; t,u)}(1+h_1(u-t)), u)\rmd u
\right\},\\
\mathrm\Phi_1(x, t)=&\bar F_1(U-t)\EE\left\{\mathcal{H}(x\rme^{l_1(\tau; t, U)}\right\}\\
+&\EE\left\{\int_t^Uf_1(u-t)\mathrm\Phi_0(x\rme^{l_1(\tau; t, u)}(1+h_0(u-t)), u)\rmd u
\right\}.
\end{aligned}\end{equation}
The set of integral equations  \eqref{def:Phits}-\eqref{def:Phit}
can be interpreted as the fundamental equation of the market model \eqref{def:S}-\eqref{def:B}.
In the case of 
deterministic and constant velocities and jumps these equations are equivalent to a hyperbolic 
PDE-system,  see equation (36) in \cite{R2007}.

\begin{rem}
Consider the model \eqref{def:S}-\eqref{def:B}
with constant $c_i, h_i$ and $r_i,\; i=0,1$
in the special case of exponentially distributed iterarrival times, \eqref{def:exp}.
The fundamental equations \eqref{def:Phits}-\eqref{def:Phit}  take the form of PDE-system:
\begin{equation}\label{eq:FEd}
\begin{aligned}
\frac{\partial \mathrm\Phi_i}{\partial t}(t, x) +c_i
x\frac{\partial \mathrm\Phi_i}{\partial x}(t,  x)
&=(r_i+\lambda_i)\mathrm\Phi_i(t, x)
-\lambda_i\mathrm\Phi_{1-i}(t, x(1+h_i)),\\
&0<t<T,\quad i=0,\; 1.
\end{aligned}
\end{equation}
Equation \eqref{eq:FEd}
is supplied with the terminal condition
\begin{equation}\label{bv}
\mathrm\Phi_i (x,\;  T) =\mathcal H(x).
\end{equation}
\end{rem}

\section{Memory effects. Numerical results}
\label{sec:num}
\setcounter{equation}{0}

\begin{figure}[p]
\begin{center}
\includegraphics[scale=0.25]{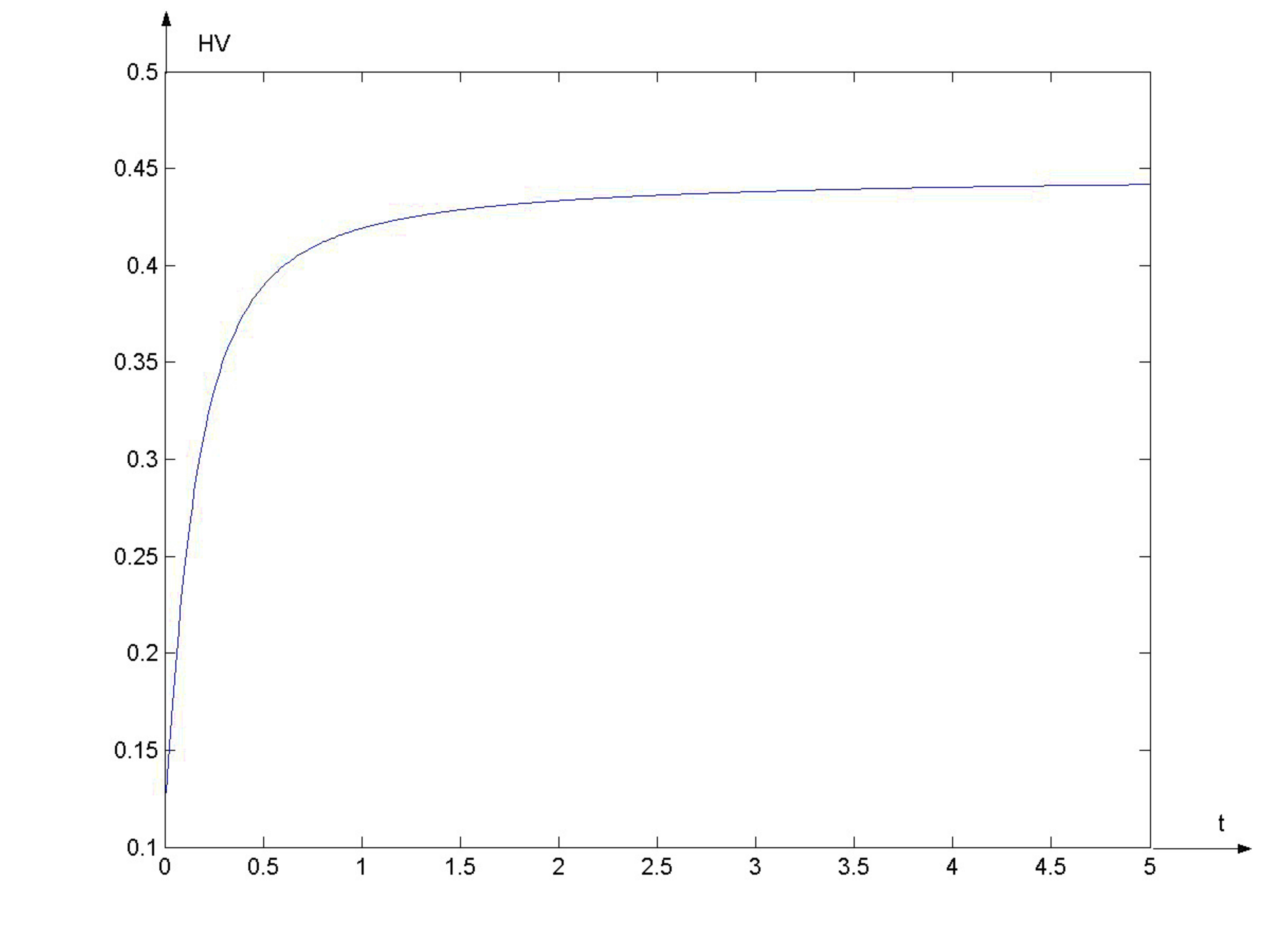}\\
\caption{Historical volatility under the  constant velocities and jump amplitudes (symmetric case):
 $c_0=1,\; h_0=-0.05;\;$ $c_1=-1,\; h_1=0.05;\;$ 
$\lambda_0=\lambda_1=5$ } \label{fig1}
\end{center}
\end{figure}

\begin{figure}[p]
\begin{center}
\includegraphics[scale=0.25]{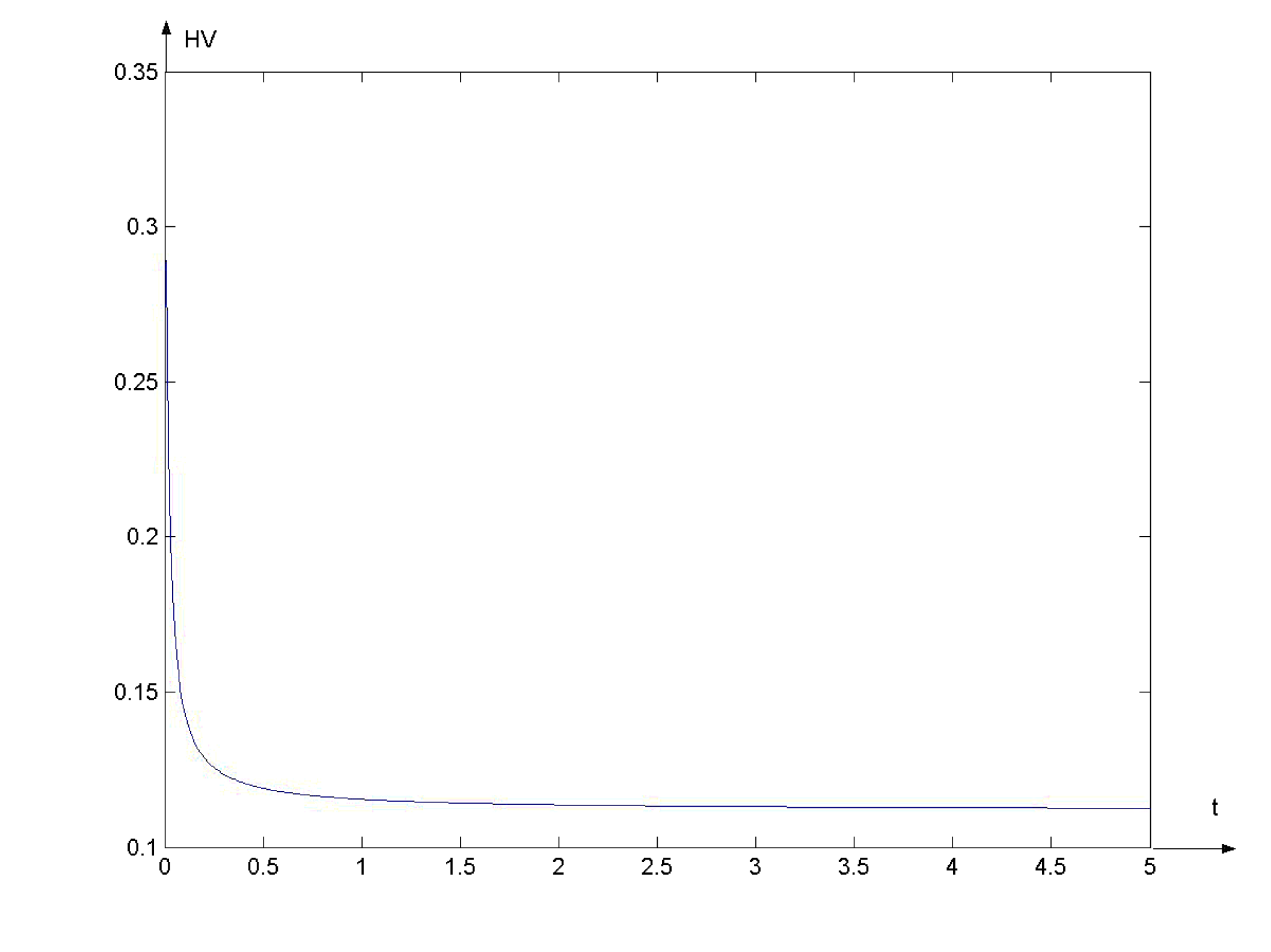}
  \caption{Historical volatility under the  constant velocities and jump amplitudes (symmetric case):
 $c_0=1,\; h_0=-0.05;\;$ $c_1=-1,\; h_1=0.05;\;$ 
   $\lambda_0=\lambda_1=80$ }
 \label{fig2}
 \end{center}
\end{figure}

\begin{figure}[p]
\begin{center}
\includegraphics[scale=0.25]{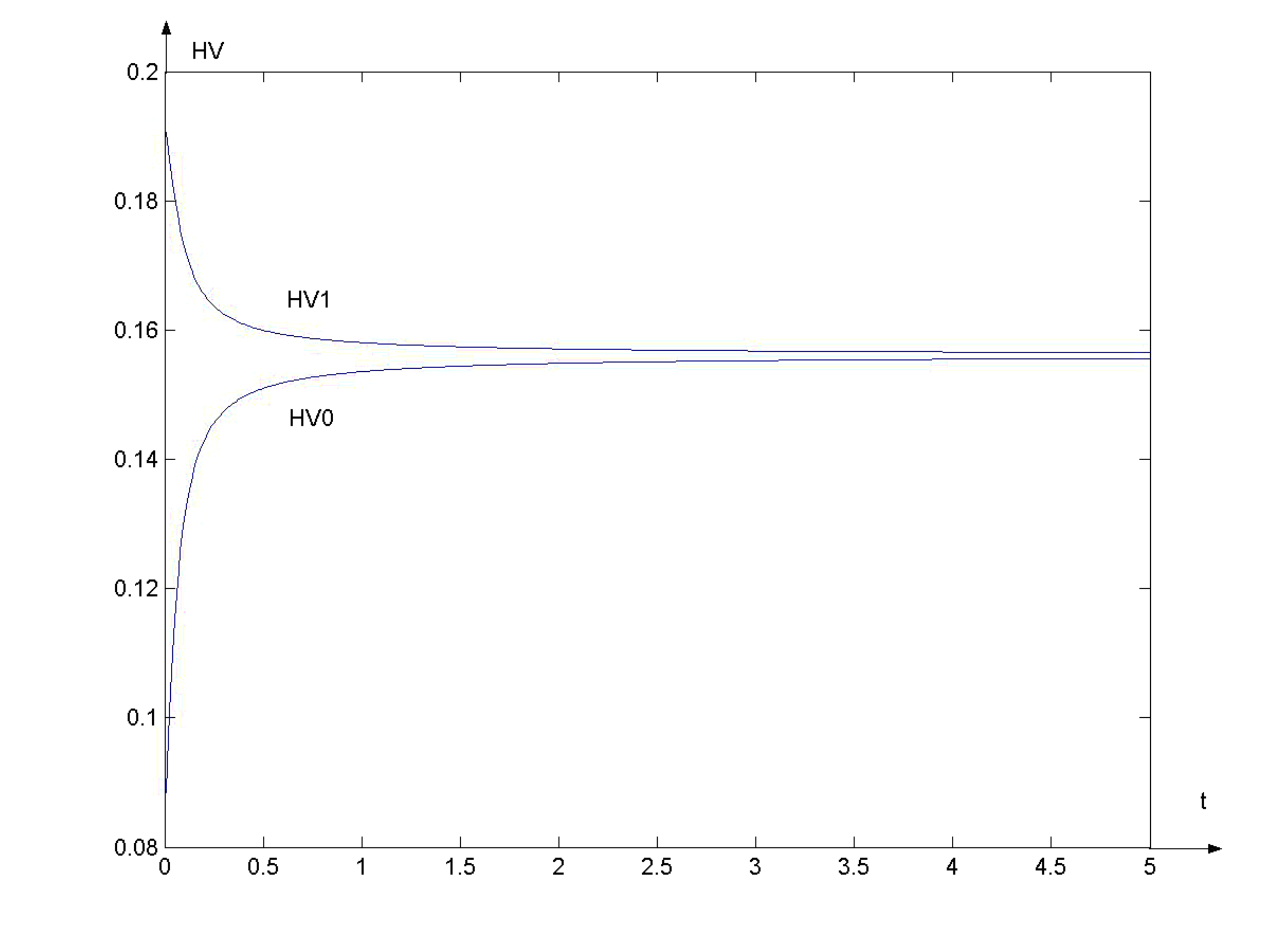}
 \caption{Historical volatility under the  constant velocities and jump amplitudes:
 $c_0=1.2,\; h_0=-0.05;\;$ $c_1=0.6,\; h_1=-0.02;\;$ 
$\lambda_0=\lambda_1=15$}
 \label{fig3}
 \end{center}
\end{figure}

\begin{figure}[p]
\begin{center}
\includegraphics[scale=0.25]{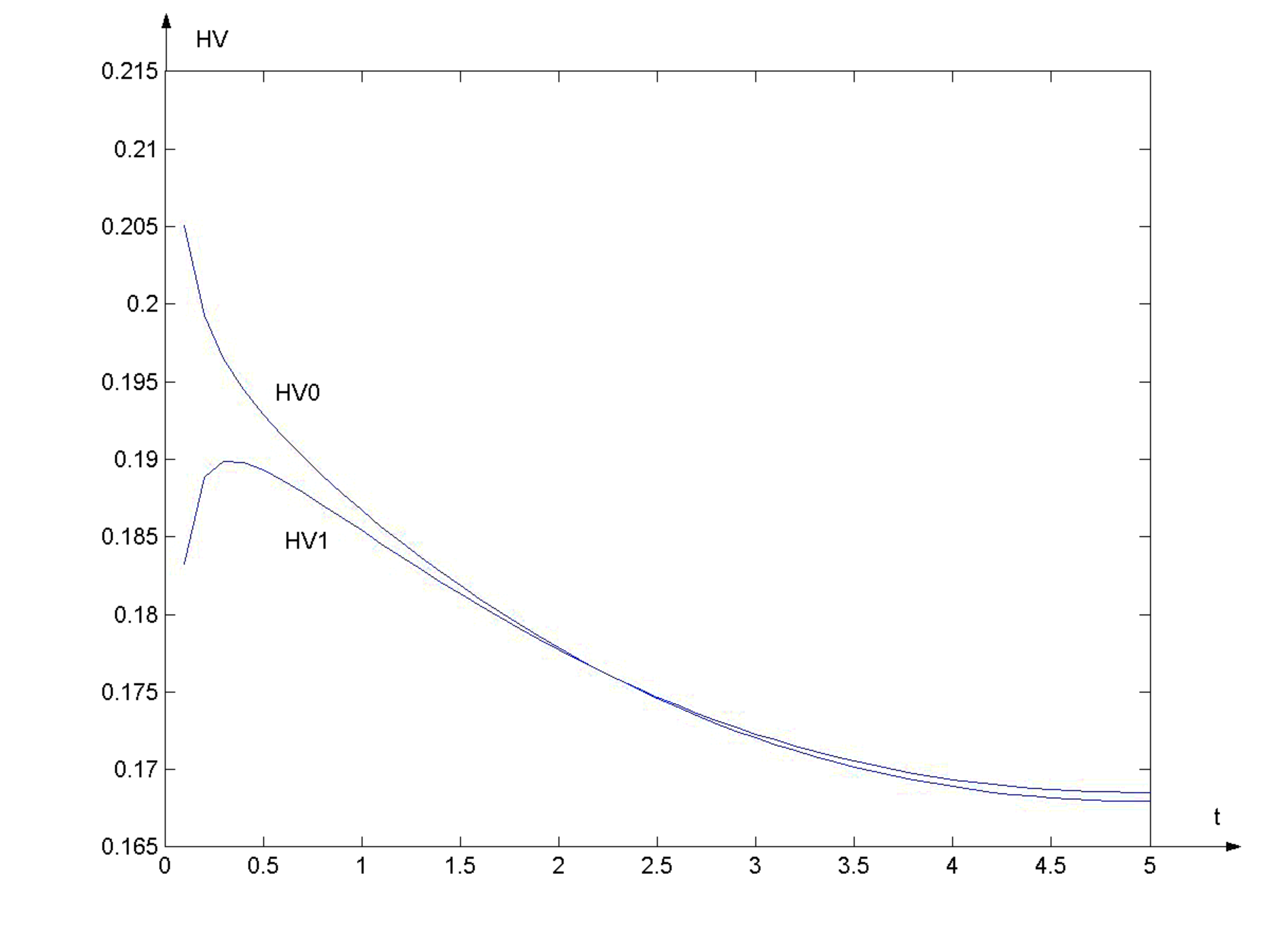}
\caption{Historical volatility under the  constant velocities and jump amplitudes:
 $c_0=1.2,\; h_0=-0.05;\;$ $c_1=0.6,\; h_1=-0.02;\;$ 
 $\lambda_0=24, \lambda_1=30$}
 \label{fig4}
 \end{center}
\end{figure}

\begin{figure}[p]
\begin{center}
  \includegraphics[scale=0.25]{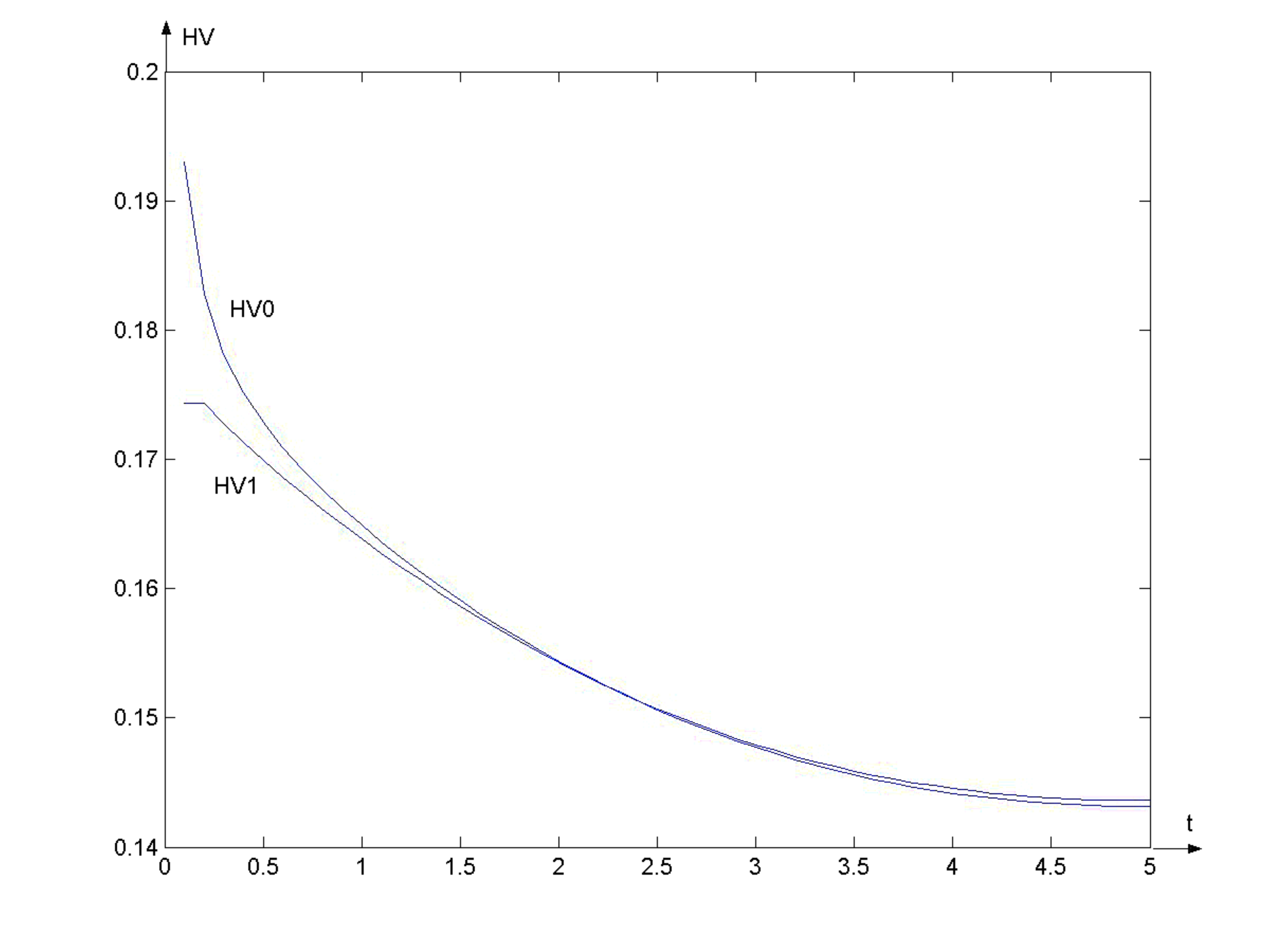}\\
 \caption{Historical volatility under the  variable velocities and jump amplitudes:
 $c_0(t)=\frac{1.2}{1+1.2t},\; h_0=\frac{-0.05}{1+1.2t};\;$ 
 $c_1=\frac{0.6}{1+0.6t},\; h_1=\frac{-0.02}{1+0.6t};\;\;$ $\lambda_0=24, \lambda_1=30$.}
 \label{fig5}
 \end{center}
\end{figure}
\begin{figure}[p]
\begin{center}
  \includegraphics[scale=0.25]{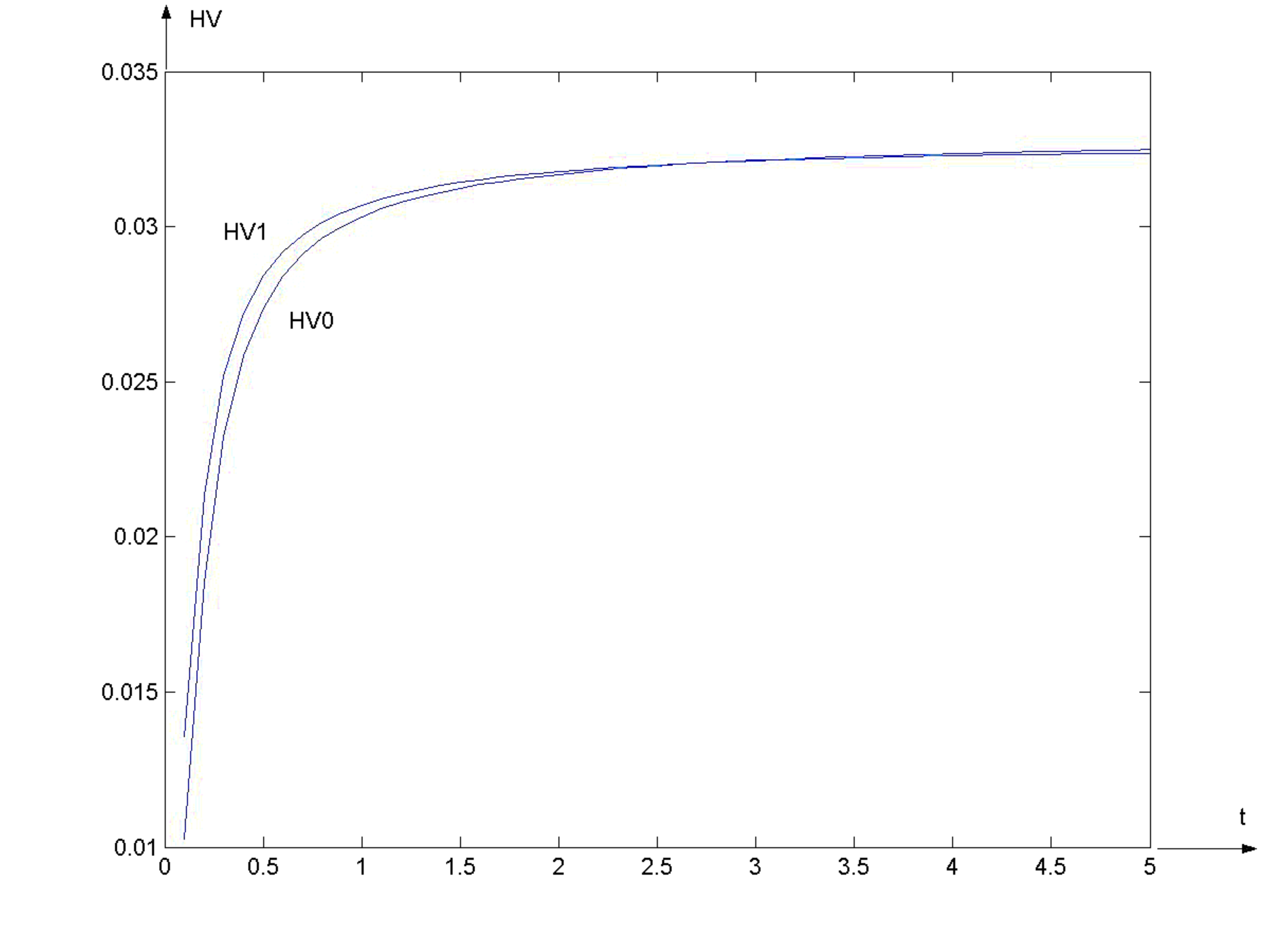}\\
 \caption{Historical volatility under the   variable velocities and jump amplitudes:
$c_0(t)=-0.5t,\; h_0=0.02t;\;$ 
 $c_1=-t,\; h_1=0.05t;\;$ $\lambda_0=25, \lambda_1=20$.}
 \label{fig6}
 \end{center}
\end{figure}

We demonstrate the memory effects related to the jump-telegraph model by means of 
the historical volatility   $\HV(\cdot)$ defined by
\begin{equation}
\label{def:HV}
\HV(t):=\sqrt{\frac{{\rm Var}\{\log S(t)\}}{t}}.
\end{equation}
For the Black-Scholes model the historical volatility is constant, 
$\HV(t)\equiv\sigma$. 

The models that capture the memory effects of the market, possess a variable historical volatility.
Consider a moving-average type model, 
which is described by the log-price
\[\begin{aligned}
\log S(t)/S(0)=&at+\sigma w(t)-\sigma\int_0^t\rmd \tau\int_{-\infty}^\tau
\lambda_0\rme^{-(\lambda_0+\lambda_1)(\tau-u)}\rmd w(u),\\
&\sigma, \; \lambda_1,\; \lambda_0+\lambda_1>0,
\end{aligned}\]
see \cite{FMM1}. This model is specially designed for the description of
exponentially decaying memory.
The historical volatility is exactly described by
\begin{equation}
\label{HV:AR}
\HV(t)=\frac{\sigma}{\lambda_0+\lambda_1}
\sqrt{\lambda_1^2+\lambda_0(2\lambda_1+\lambda_0)\varphi_\lambda(t)/t},
\end{equation}
where  $\varphi_\lambda$ is defined by \eqref{def:varphi}.
See formula (4.8) in \cite{FMM1}.

Consider the  market model, based on the stochastic exponential of jump-telegraph process
$X=X(t),\; t\in[0, U]$, see \eqref{def:S}. Surprisingly, the historical volatility of this model 
agrees with the models of a moving-average type,  see \cite{FMM1}. 
For convenience, 
we define the historical volatility in jump-telegraph  model by
$\HV_i(t):=\sqrt{\sigma_i(t)/t},\; i=0, 1$ instead of \eqref{def:HV}.
Here $\sigma_0(t)={\rm Var}\{X_0(t)\}$ and $\sigma_1(t)={\rm Var}\{X_1(t)\}$
solve system \eqref{eq:var}. The explicit formulae for $\HV_i(t)$ are rather cumbersome, 
even if  the case of constant and deterministic velocities and jumps.
Nevertheless, it is easy to compute the limits of $\HV_i(t)$ as $t\to0$ and as $t\to\infty$:
\[
\begin{aligned}
\lim\limits_{t\to 0}\HV_i(t)&=\sqrt{\lambda_i}|h_i|,\\
\lim\limits_{t\to \infty}\HV_i(t)&=
\sqrt{\frac{\lambda_0\lambda_1}{2\lambda^3}
\left[(\lambda_0B+c)^2+(\lambda_1B-c)^2\right]},
\quad i=0, 1,
\end{aligned}
\] 
see (4.5)-(4.6) in \cite{R2007Jamsa}. Here  the jump-telegraph process $X$ is defined with 
the constant velocities $c_0, c_1,\; c_0>c_1$ and with the constant jumps  $h_0,\; h_1>-1$;
$\lambda=(\lambda_0+\lambda_1)/2,\; B=(h_0+h_1)/2$ and $c=(c_0-c_1)/2$;
the subscript $i=\ep(0)$  indicates the initial market state.
\setlength{\unitlength}{1cm}

In the symmetric case, $\lambda_0=\lambda_1=\lambda$,
the historical volatility $\HV_i(t),\; t\geq0$ can be expressed by 
\begin{equation}\label{HVsymmetric}
\begin{aligned}
\HV_i(t)=&\sqrt{\frac{c^2}{\lambda}+\lambda B^2+(c+\lambda
b)^2\frac{\varphi_{2\lambda}(t)}{\lambda t}+\gamma_i\frac{\varphi_\lambda(t)}{t}
+(-1)^i2B(c+\lambda b){\rm e}^{-2\lambda t}},\\
&i=0,\; 1,
\end{aligned}
\end{equation}
where $b=(h_0-h_1)/2, \gamma_i=-2c(c/\lambda+(-1)^ih_i),\; i=0, 1,$ 
see formula (4.2) in \cite{R2007Jamsa}.

In particular, if in this symmetric case the jumps are also symmetric,
$h_0=-h_1=h,$ and $X$ is the martingale, $c+\lambda h=0$, then
$B=0,\; c+\lambda b=0$ and $\gamma_0=\gamma_1=0$. So \eqref{HVsymmetric}
gives the constant  historical volatility, $\HV_0=\HV_1\equiv c/\sqrt{\lambda}.$
In general, formula \eqref{HVsymmetric} comports with formulae 
for historical volatility of the history dependent model with memory
\eqref{HV:AR}.

Fig. \ref{fig1}-\ref{fig2}  contain the plots in the  symmetric  case.
Here $\HV_0\equiv\HV_1$. Fig. \ref{fig3}-\ref{fig4} also represent the model with constant parameters.
In these cases we use directly formula \eqref{HVsymmetric}.

Some other computations and plots of historical and implied volatilities with constant parameters $c_i, h_i, i=0,1,$
see also in \cite{R2008}.

We compute the historical volatility for the variable (deterministic) velocities and jumps
as the solution of system \eqref{eq:var} by formula \eqref{bfsigma}.
Fig. \ref{fig5} and Fig. \ref{fig6} show the result.

\end{document}